\renewcommand{\p@enumii}{} 
\renewcommand{\p@enumiii}{}
\newcommand{\subs}{\subseteq}
\providecommand{\definitionname}{Definition}
\newtheorem{theorem}{Theorem} [section]
\newtheorem{lemma}[theorem]{Lemma}
\newtheorem{fact}[theorem]{Fact}
\newtheorem{cor}[theorem]{Corollary}
\newtheorem{conj}{Conjecture}
\def\N{\mathbb N}
\def\R{\mathbb R}
\def\RR{\tilde R}  
\def\rr{\tilde{r}}
\def\sr{\hat{r}}
\def\Codd{{\mathcal C}_{\text odd}}
\def\Con{\mathcal Con}
\def\cC{\mathcal C}
\def\cG{\mathcal G}
\def\cH{\mathcal H}
\begin{document}

\title{Online size Ramsey numbers: Odd cycles vs connected graphs}
\author{Grzegorz Adamski}
\author{Ma\l gorzata Bednarska-Bzd\c ega}
\affil{Faculty of Mathematics and CS, Adam Mickiewicz University in Pozna\'n}

\maketitle

\begin{abstract}
Given two graph families $\cH_1$ and $\cH_2$, a size Ramsey game is  
played on the edge set of $K_\N$. In every round, Builder selects an edge and Painter colours 
it red or blue.  Builder is trying to force Painter to create a red copy of a graph from $\cH_1$ or a blue copy of a graph from
$\cH_2$ as soon as possible. The online (size) Ramsey number $\rr(\cH_1,\cH_2)$ is the smallest number of rounds in the game
provided  Builder and Painter play optimally. 
We prove that if $\cH_1$ is the family of all odd cycles and $\cH_2$ is the family of all connected graphs on $n$ vertices
and $m$ edges, then $\rr(\cH_1,\cH_2)\ge  \varphi n + m-2\varphi+1$, where $\varphi$ is the golden ratio, and for 
$n\ge 3$, $m\le (n-1)^2/4$ we have $\rr(\cH_1,\cH_2)\le n+2m+O(\sqrt{m-n+1})$. We also show that $\rr(C_3,P_n)\le 3n-4$ for $n\ge 3$.
As a consequence we get $2.6n-3\le \rr(C_3,P_n)\le 3n-4$ for every $n\ge 3$.
\end{abstract}

\section{Introduction}

Let $\cH_1$ and $\cH_2$ be nonempty families of finite graphs. Consider the following game  $\RR(\cH_1, \cH_2)$
between Builder and Painter, played on the infinite board $K_\N$ (i.e. the board is a complete graph with the vertex set $\N$). In every round, Builder chooses a previously unselected
edge of $K_\N$ and Painter colours it red or blue. The game ends if after a move of Painter there is a red copy of a graph $\cH_1$ 
or a blue copy of a graph from $\cH_2$.  Builder tries to finish the game as soon as possible, while the goal of Painter is the opposite.
Let $\rr(\cH_1, \cH_2)$ be the minimum number of rounds in the game $\RR(H_1, H_2)$, provided both players play optimally. 
If $\cH_i$ consists of one graph $H_i$, we simply write  $\rr(H_1, H_2)$ and $\RR(H_1, H_2)$ and call them an online size Ramsey 
number and an online size Ramsey game, respectively. 
In the literature, online size Ramsey numbers are called also online Ramsey numbers, 
which can be a bit confusing. The online size Ramsey number  $\rr(H_1, H_2)$ is a game-theoretic counterpart of the classic size Ramsey number 
$\sr(H_1, H_2)$, i.e.~the minimum number of edges in a graph with the property that every two-colouring of its edges results in
a red copy of $H_1$ or a blue copy of $H_2$.  One of the most interesting questions in online size Ramsey theory is related to the   
Ramsey clique game $\RR(K_n, K_n)$,  introduced by Beck \cite{beck}. The following problem, attributed by Kurek and Ruci\'nski 
\cite{ar} to V.~R\H odl, is still open:

\begin{conj}
$\frac{\rr(K_n,K_n)}{\sr(K_n,K_n)}\to 0\text{ for } n\to\infty.$
\end{conj}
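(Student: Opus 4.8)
The plan I would pursue starts from the observation that the denominator is already completely understood: by the theorem of Erd\H os, Faudree, Rousseau and Schelp, $\sr(K_n,K_n)=\binom{r(n)}{2}$, where $r(n)=r(K_n,K_n)$ is the diagonal Ramsey number, with the extremal host being $K_{r(n)}$ itself. Morally, the offline player is forced to commit to something as large as the whole complete graph $K_{r(n)}$, because a single host must defeat \emph{every} colouring at once. Thus the conjecture reduces to a purely one-sided task: exhibit a Builder strategy for $\RR(K_n,K_n)$ that forces a monochromatic $K_n$ while revealing only $o\!\left(r(n)^2\right)$ edges, since then $\rr(K_n,K_n)/\sr(K_n,K_n)=o(1)$. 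I should stress at the outset that this is a notorious open problem (Beck's question, Rödl's conjecture); what follows is the line of attack I would take and the precise point at which it resists completion.

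First I would isolate the source of Builder's online advantage. Online, Builder only has to defeat the one colouring Painter actually produces, may commit edges one at a time, and may abandon any partial clique the instant Painter's answer renders it useless. The natural vehicle is an online analogue of the recursive Ramsey bound $r(s,t)\le r(s-1,t)+r(s,t-1)$: reveal a star from a fresh vertex $v$ to a current working set, let the majority colour on that star decide whether $v$ is pushed into a red-committed or a blue-committed pool, and recurse on the larger pool with the corresponding colour parameter decremented. Tracking the asymmetric numbers $\rr(K_s,K_t)$, one obtains a recursion of the shape $\rr(K_s,K_t)\le \rr(K_{s-1},K_t)+\rr(K_s,K_{t-1})+(\text{star cost})$. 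The difficulty, already visible here, is that solving this recursion naively only recovers a bound of order $\binom{r(n)}{2}$, so the star strategy by itself does not beat the offline value; genuine cleverness must be injected to turn the ``abandon useless cliques'' freedom into quantitative savings.

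The key technical step would therefore be a potential/weight argument designed to extract a growing savings factor from the adaptive pruning. I would assign to each partially built monochromatic clique a weight measuring how many further correct Painter answers are still needed to complete it, and try to show that, because Builder always extends along the majority colour and recycles committed pools across branches of the recursion, a constant fraction of revealed edges advance some clique toward completion rather than being wasted. Summed over the game, the hope is to reach a bound of the form $\rr(K_n,K_n)\le \binom{r(n)}{2}/c^{\,n}$ for some fixed $c>1$, which immediately forces the ratio to $0$.

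The main obstacle is exactly that this final saving resists being made uniform in $n$, and this is where the problem is genuinely open. Painter is also adaptive and can answer so as to keep Builder's partial cliques perfectly \emph{balanced}, so that the majority-colour recursion gains only a polynomially small, rather than exponentially small, factor over $\binom{r(n)}{2}$. As a result the weight argument in the form above is only known to yield the required super-constant gap along a subsequence of $n$ (essentially the gain obtained by Conlon), and closing it for \emph{every} $n$ — equivalently, ruling out a Painter strategy forcing $\rr(K_n,K_n)=(1-o(1))\binom{r(n)}{2}$ — is the step I expect to be truly hard. I would therefore concentrate the real effort not on the strategy design but either on a matching Painter lower bound that is smooth in $n$, or on a self-improving, amortised version of the recursion capable of converting a gain known for one value of $n$ into a gain for all larger $n$.
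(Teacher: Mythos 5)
There is no proof to compare against: the statement you were given is Conjecture~1 of the paper, attributed to R\H odl, and the paper explicitly states that it is still open --- the only progress it cites is Conlon's result that the ratio tends to $0$ along an infinite increasing subsequence of $n$. Your proposal, to its credit, recognizes this and does not claim to resolve it; but that means it is a research plan, not a proof, and the decisive step is missing by your own admission. The reduction you start from is sound: $\sr(K_n,K_n)=\binom{r(n)}{2}$ (Chv\'atal's observation, reported by Erd\H os, Faudree, Rousseau and Schelp), so the conjecture is equivalent to the purely Builder-side bound $\rr(K_n,K_n)=o\bigl(\tbinom{r(n)}{2}\bigr)$. But that reduction is the easy part. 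The genuine gap is the entire quantitative content of the problem: the potential/weight argument you sketch --- showing that Builder's freedom to abandon useless partial cliques yields a savings factor $c^{-n}$, or even any factor tending to $0$, \emph{uniformly in $n$} --- is precisely what is not known. As you yourself observe, the majority-colour recursion $\rr(K_s,K_t)\le\rr(K_{s-1},K_t)+\rr(K_s,K_{t-1})+O(\text{star cost})$ only reproduces a bound of the same order as $\binom{r(n)}{2}$, and the claim that ``a constant fraction of revealed edges advance some clique toward completion'' is exactly the unproven assertion, since an adaptive Painter can keep Builder's partial cliques balanced and neutralize the amortization.

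In short: your text is an accurate account of the state of the art (essentially Conlon's approach and the precise point where it stops short of the full conjecture), and it correctly identifies the open step, but it proves nothing beyond what was already known, and nothing in the paper does either --- the paper merely records the conjecture and builds its actual results (Theorems~\ref{oddcon}, \ref{c3pn} and \ref{conupper}) on entirely different games. Any grading of your submission should treat it as a correct problem survey with an honestly declared gap, not as a proof attempt that fails at a hidden point.
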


Conlon \cite{con} proved the above is true for an infinite increasing sequence of cliques. It is natural to ask for other natural graph 
sequences $H_n$ with their online size Ramsey numbers much smaller than their 
size Ramsey numbers, i.e. $\rr(H_n,H_n)=o(\sr(H_n,H_n))$. 
Bla\v zej, Dvo\v r\' ak and Valla \cite{ind} constructed an infinite sequence of increasing trees $T_n$ such that $\rr(T_n,T_n)=o(\sr(T_n,T_n))$.
On the other hand, Grytczuk, Kierstead and Pra\l at \cite{gryt} constructed a family of trees $B_n$ on $n$ vertices such that both
$\rr(B_n,B_n)$ and $\sr(B_n,B_n)$ are of order $n^2$. 

Suppose that for a sequence of graphs $H_n$ on $n$ vertices we know that $\sr(H_n,H_n)$ (or $\sr(H_n,G)$ with a fixed graph $G$) is linear. 
Then clearly also $\rr(H_n,H_n)$ (or $\rr(H_n,G)$) is linear. 
In such cases, it is usually not easy to find multiplicative constants in the lower and the upper bound respectively, which do not differ much. 
For example, for paths $P_n$ on $n$ vertices we have 
$2n-3\le \rr(P_n,P_n)\le 4n-7$, for cycles $C_n$  we have  $2n-1\le \rr(C_n,C_n)\le 72n-3$ 
(in the case of even cycles the multiplicative constant in the upper bound can be improved to $71/2$). 
The lower bound in both examples comes from an observation that in general $\rr(H_1, H_2)\ge |E(H_1)|+|E(H_2)|-1$, which is a result
of the following strategy of Painter: she colours every selected edge $e$ red unless $e$ added to a red subgraph 
present on the board would create a red copy of $H_2$.   The upper bound for paths was proved in \cite{gryt}, while the upper bound for
cycles -- in \cite{ind}. 
The exact results on $\rr(H_1,H_2)$ are rare, except for very small graphs $H_1$, $H_2$.
It is known that $\rr(P_3,P_n)=\lceil 5(n-1)/4\rceil$ and $\rr(P_3,C_n)=\lceil 5n/4\rceil$ \cite{lo}.

The studies on games $\RR(\cH_1, \cH_2)$ for graph class  $\cH_1$, $\cH_2$ such that at least one class is infinite, are not so
popular as their one-graph versions.
Nonetheless, they appear implicitly in the analysis of one-graph games $\RR(H_1, H_2)$ since a standard method of getting a lower bound on 
$\rr(H_1,H_2)$ is to assume that Painter  colours every selected edge $e$ red unless $e$ added to a red subgraph 
present on the board would create a red copy of a graph from $\cH$, for some graph class $\cH$. 
For example Cyman, Dzido, Lapinskas and Lo \cite{lo} analysed such a strategy  while studying the game $\RR(C_k,H)$ for a connected graph $H$
and implicitly obtained that 
\begin{equation}\label{cyccon}
\rr(\cC,\Con_{n,m})\ge n+m-1,
\end{equation} 
where $\cC$ denotes the class of all cycles
and $\Con_{n,m}$ is the class of all connected graphs with exactly $n$ vertices and at least $m$ edges. 

Infinite graph classes are also studied explicitly in the size Ramsey theory and in its online counterpart. 
Dudek, Khoeini and Pra\l at \cite{dud} initiated the study on the size Ramsey number $\sr(\cC, P_n)$. 
Bal  and Schudrich \cite{bal} proved that $2.06n-O(1)\le \sr(\cC,P_n)\le 21/4 n+27$.
The online version of this problem was considered by  Schudrich \cite{ms}, who showed that $\rr(\cC, P_n)\le 2.5 n+5$. 
Here the multiplicative constant is quite close to the constant 2 in the lower bound, given by \eqref{cyccon}. 

Let $\Codd$ denote the class of all odd cycles and $S_n$ be a star with $n$ edges (i.e.~$S_n=K_{1,n}$). 
Pikurko \cite{oleg}, while disproving Erd\H os' conjecture that $\sr(C_3, S_n)=(3/2+o(1))n^2$, proved that
$\sr(\Codd, S_n)=(1+o(1))n^2$.  As for the online version of this problem, it is not hard to see that 
Builder can force either a red $C_3$ or blue $S_n$ within $3n$ rounds (he starts with selecting $2n$ edges of a star, then he forces
a blue $S_n$ on the set of vertices incident to red edges). Thus $\rr(C_3, S_n)\le 3n$, so $\rr(\Codd, S_n)\le 3n$ and 
we see another example of online size Ramsey numbers which are much smaller 
than their classic counterpart. Let us mention that the constant 3 in the upper bound on  $\rr(C_3, S_n)$ is not far 
from the constant $2.6$ in the lower bound, implied by our main Theorem \ref{oddcon} below.  

In this paper we focus on games $\RR(\Codd,\Con_{n,m})$ and $\RR(C_3,P_n)$. 
Since $\rr(\cC,\Con_{n,m})\le\rr(\Codd,\Con_{n,m})$, in view of \eqref{cyccon} we have  
$\rr(\Codd,\Con_{n,m})\ge n+m-1$. We improve this lower bound.  Our strategy for Painter is based on a potential function method and the golden number $\varphi=(\sqrt5+1)/2$ plays a role in it. Here is the first main result of our paper. 

\begin{theorem}\label{oddcon}
For every $m,n\in\N$ such that $n-1\le m\le \binom n2$ 
$$\rr(\Codd,\Con_{n,m})\ge \varphi n + m-2\varphi+1.$$
\end{theorem}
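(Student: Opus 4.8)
The plan is to establish the bound by exhibiting a strategy for Painter and tracking a suitable potential function. Since the only red configuration Builder can force is an odd cycle, Painter's first commitment is to keep the red graph bipartite: he never colours an edge red if doing so would close an odd red cycle. This guarantees that no red copy of a graph from $\Codd$ ever appears, so the game can only end when a blue copy of some graph from $\Con_{n,m}$ is created, i.e.\ when the blue graph first contains a connected subgraph on $n$ vertices with at least $m$ edges. It therefore suffices to show that Painter can postpone such a blue subgraph beyond round $\varphi n + m - 2\varphi + 1$. The naive rule ``colour red unless forced'' is not enough: against it Builder first builds a red star (every edge accepted as red, since a star is bipartite), placing its $n$ leaves pairwise at even red-distance, and then forces $m$ blue edges among the leaves, winning in only $n+m$ rounds. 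Beating $n+m$ and reaching $\varphi n+m$ compels Painter to \emph{concede} a blue edge now and then, precisely to stop Builder from assembling such a cheap even-distance ``hub''.

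To quantify this I would introduce a potential $\Phi$ on game positions of the form (number of blue edges) $+\,\varphi\cdot(\text{weighted count of vertices by their role in the red/blue structure}) + \text{const}$, with two design goals. First, once the blue graph contains a member of $\Con_{n,m}$ one should have $\Phi\ge \varphi n+m-2\varphi+1$: the at least $m$ blue edges contribute $m$, the $n$ vertices of the blue subgraph contribute $\varphi n$, and the additive constant accounts for the $-2\varphi+1$ term. Second, under Painter's strategy $\Phi$ should grow by at most $1$ per round, so that $\Phi$ never exceeds the number of rounds played. Granting both, at the end the number of rounds is at least $\Phi\ge \varphi n+m-2\varphi+1$, which is the assertion. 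The golden ratio is forced here: it is the unique vertex weight for which Painter can keep the per-round increment at most $1$ in the two opposing extremal lines of play. A smaller weight would let Builder's hub/star strategy drive the increment above $1$ (equivalently, win faster than the claimed bound); a larger weight could not be sustained in the rounds where Painter is compelled to extend the red structure. Concretely $\varphi$ is pinned down by $\varphi^2=\varphi+1$, the balance between ``one red edge now plus the eventual forced blue edge'' and ``two red edges now''.

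Painter's strategy is then read off from the potential: on each move he colours the new edge in whichever admissible way (red only when it preserves bipartiteness) keeps $\Delta\Phi\le 1$, which amounts to maintaining an invariant on the red components limiting how many same-class vertices remain cheaply reachable, and conceding a blue edge exactly when accepting a red edge would create a cheap even-distance pair for Builder to exploit later. The bulk of the work, and the main obstacle, is the case analysis verifying $\Delta\Phi\le1$ for \emph{every} Builder move against this strategy. The delicate cases are (a) an edge Painter is forced to colour blue because its endpoints already lie at even red-distance (hence in the same red component and the same bipartition class), where the potential must have been raised in advance by the red edges that created that even distance; and (b) a red edge that merges two red components, which can create many new even-distance pairs at once and so threatens a large jump in $\Phi$. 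The weights must be arranged so that neither event raises $\Phi$ by more than $1$, and checking that the single choice $\varphi$ handles all such configurations simultaneously is the technical heart of the proof. Once the increment bound is verified in every case, combining it with the terminal lower bound on $\Phi$ yields $\rr(\Codd,\Con_{n,m})\ge \varphi n+m-2\varphi+1$.
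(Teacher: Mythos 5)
Your proposal correctly identifies the general method the paper uses---Painter avoids odd red cycles, a potential function with golden-ratio weights is tracked, the per-round increment is bounded, and a terminal lower bound on the potential converts into a lower bound on the number of rounds---but as it stands it is a plan rather than a proof, and the two pieces it leaves open are precisely where all the content lies. First, the potential is never actually written down. Your proposed shape, ``(number of blue edges) $+\,\varphi\cdot(\text{weighted vertex count})+\text{const}$,'' is a global additive expression, whereas the function that actually works is structural: the paper maintains the stronger invariant that the \emph{entire} coloured graph is red-bipartite (no cycle with an odd number of red edges, not merely no all-red odd cycle), and for each component with red-bipartition $(V_1,V_2)$ it sets $p(V_i)=|V_i|\varphi+|E[V_i]|$, $a=\max(p(V_1),p(V_2))$, $b=\min(p(V_1),p(V_2))$, and $f=\varphi a-\varphi+\max(a-\varphi^3,b)$, summed over components. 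The max/min over the two sides of each component's bipartition is essential, because a connected blue subgraph lives entirely inside one side, so the terminal bound must come from a single side of a single component; and the $\varphi^3$ threshold in $\max(a-\varphi^3,b)$ is exactly what makes the component-merging cases close. None of this can be ``read off'' from the two design goals you state---finding a function satisfying both simultaneously is the discovery the proof consists of.

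Second, the increment bound $\Delta\Phi\le\varphi+1$ (in the paper's normalisation) is asserted but not verified, and you say yourself that this case analysis is the technical heart. The paper needs roughly six cases (edge inside a component on the same side or across sides; two isolated vertices; one isolated vertex attached to a balanced or imbalanced component; two nontrivial components merged, split further by the signs of $p(V_1)-p(V_2)$ in each), and in the merging cases the verification uses convexity and symmetry of the associated two-variable function $g(x,y)$ together with the identity $g(x+z,y+z)=g(x,y)+(\varphi+1)z$ to reduce to one-variable inequalities. Your qualitative remarks about the delicate cases (forced blue edges between same-class vertices, and component merges creating many even-distance pairs) are accurate diagnoses, but without the explicit function and the case-by-case check the argument does not establish the bound. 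So the proposal follows the paper's route in outline but has a genuine gap: the potential function and the proof of the key lemma are missing.
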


We will prove this theorem in Section \ref{lower}.
As a side effect, we receive the lower bound $\rr(C_{2k+1},T_n)\ge 2.6n-3$ for $k\ge 1$ and  every tree $T_n$ on $n\ge 3$ vertices.

In order to find an upper bound for $\rr(\Codd,\Con_{n,m})$, we begin from estimating $\rr(C_3,P_n)$. Here is our second main result,
which will be proved in Section \ref{upper}.

\begin{theorem}\label{c3pn}
For $n\ge 3$
$$\rr(C_3,P_n)\le 3n-4.$$
\end{theorem}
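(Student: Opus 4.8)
The plan is to give an explicit strategy for Builder that forces a blue $P_n$ (or a red $C_3$) within $3n-4$ moves, built around the basic principle that Painter, in order to avoid a red triangle, must colour blue every edge $xy$ whose two endpoints already have a common red neighbour. Builder maintains a single blue path $P$ and extends it from an endpoint $v$, presenting probe edges so as either to lengthen $P$ directly (when Painter answers blue) or to manufacture, through a common red neighbour, a forced blue edge \emph{incident to the current endpoint}.

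Concretely, I would maintain the invariant that the current endpoint $v$ carries an auxiliary red edge $vh$ to some helper vertex $h$. To extend, Builder presents $hw$ for a fresh $w$. If Painter colours $hw$ red, then $v$ and $w$ share the red neighbour $h$, so the follow-up edge $vw$ is forced blue and lengthens $P$ to $w$; moreover $hw$ is now red, which reinstates the invariant with the same helper. If Painter colours $hw$ blue, Builder presents $vw$: a blue answer appends $w$ to the path (and, via the blue edge $hw$, lets $h$ be appended as well), whereas a red answer creates a second red edge at $v$ and supplies a new helper. Each branch is to be analysed for its edge cost and for the leftover red edges it deposits, the goal being an amortised cost of at most three edges per vertex added to $P$. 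A potential-type accounting that rewards path length and discounts ``stored'' red edges makes this precise, and a separate treatment of the initialisation and of the final extension is what sharpens the total from roughly $3n$ down to exactly $3n-4$.

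The main obstacle is what one may call the disconnection problem: the forced blue edges produced by a common red neighbour $c$ lie among the red neighbours of $c$, and those vertices are only red-joined to $P$, so handled naively they pile up into a blue component detached from $P$. The whole purpose of the helper invariant is to arrange that the forced blue edge is always incident to the current endpoint, so that it extends $P$ instead of spawning a separate segment. The delicate case is when Painter repeatedly answers the probe edges blue while colouring the endpoint-probes red, which threatens to build either a useless blue star at the helper or a disconnected blue path among the red neighbours of the endpoint; Builder must then pivot the roles of endpoint and helper and reuse the blue probe edges as genuine path edges, so that no progress is lost. Checking that every line of Painter's play falls into one of these controlled cases, and that the bookkeeping of red edges never pushes the amortised cost above three per vertex, is the crux of the argument; once that is established, summing over the $n-1$ edges of the target path and accounting for the boundary terms gives $\rr(C_3,P_n)\le 3n-4$.
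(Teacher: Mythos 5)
Your proposal is a plan rather than a proof, and the step you yourself identify as the crux is exactly where it breaks. Consider the line of play you flag: Painter colours every helper-probe blue and every endpoint-probe red. After $k$ iterations Builder has spent $2k$ edges, the path $P$ has not grown at all, the endpoint $v$ has acquired red edges to $h, w_1,\dots,w_k$, and the probe edges form a blue path $h\,w_1\cdots w_k$ whose \emph{both} ends are already red-joined to $v$. So there is no free blue edge with which to splice this segment onto $P$: the only connections to $P$ run through red edges at $v$, and the common-red-neighbour trick cannot be applied to create a blue edge from $h$ or $w_k$ to $v$ because those edges are already coloured red. ``Pivoting'' to make $h\cdots w_k$ the new main path abandons everything built so far. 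What you are left with is a $brb$-path (blue path, red edge, blue path), and joining two such objects into one blue path costs two further edges and sacrifices the shorter blue side of a red edge. Iterating your strategy against this Painter therefore produces a \emph{collection} of $brb$-paths whose total sacrificed length must be bounded, and nothing in your amortised accounting controls that loss; the claim that the cost stays at three edges per vertex is asserted, not proved.

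This is precisely the point at which the paper's argument diverges from yours and does real work. Instead of growing one path, Builder there repeatedly connects the two \emph{shortest} pure blue paths, which forces the surviving pure blue paths to have lengths that are (essentially) powers of two and forces the imbalances $u(P(s,t))=|s-t|$ of the successive imbalanced $brb$-paths to at least double each time. Consequently the sum of all imbalances except the last is at most the largest one, so when all the pieces are merged at the end (two edges per merge, via the observation that an endpoint of a long blue path joined to both ends of a red edge yields at least one blue extension), the total loss is absorbed and the count closes at $3n-4$. Without an analogue of this global balancing — or some other mechanism that bounds the aggregate length sacrificed across all the detached blue segments Painter can create — your argument does not yield $3n+O(1)$, let alone the exact constant $3n-4$.
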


Clearly $\rr(\Codd,\Con_{n,n-1})\le \rr(C_3,P_n)$ so Theorems \ref{oddcon} and \ref{c3pn} give the following bounds on $\rr(C_3,P_n)$.

\begin{cor}\label{cor1}
If $n\ge 3$, then
$$\big\lceil (\varphi+1)n -2\varphi\big\rceil\le \rr(C_3,P_n)\le 3n-4.$$
\end{cor}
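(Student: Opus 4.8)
The upper bound $\rr(C_3,P_n)\le 3n-4$ is exactly Theorem~\ref{c3pn}, so the plan is to derive the lower bound from Theorem~\ref{oddcon} by a monotonicity argument. The online size Ramsey number can only decrease when either target family is enlarged: if $\cH_1\subs\cH_1'$, then any Builder strategy that forces a red copy of some graph in $\cH_1$ or a blue copy of some graph in $\cH_2$ also forces a red copy of some graph in $\cH_1'$ within the same number of rounds, so $\rr(\cH_1',\cH_2)\le\rr(\cH_1,\cH_2)$; the same holds when the second family is enlarged. I will use the two memberships $\{C_3\}\subs\Codd$ (a triangle is an odd cycle) and $\{P_n\}\subs\Con_{n,n-1}$ (the path $P_n$ is connected and has exactly $n$ vertices and $n-1$ edges). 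Chaining the two monotonicity steps gives
\[
\rr(\Codd,\Con_{n,n-1})\le\rr(\Codd,P_n)\le\rr(C_3,P_n).
\]

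Next I would apply Theorem~\ref{oddcon} with $m=n-1$. For $n\ge 3$ the hypothesis $n-1\le m\le\binom n2$ holds, since $n-1\le n-1$ and $n-1\le n(n-1)/2$. The theorem then yields
\[
\rr(\Codd,\Con_{n,n-1})\ge \varphi n+(n-1)-2\varphi+1=(\varphi+1)n-2\varphi.
\]
Combining this with the displayed chain of inequalities gives $\rr(C_3,P_n)\ge(\varphi+1)n-2\varphi$, and since $\rr(C_3,P_n)$ is an integer I may round the right-hand side up to obtain $\rr(C_3,P_n)\ge\big\lceil(\varphi+1)n-2\varphi\big\rceil$, which is the claimed lower bound.

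The argument is essentially bookkeeping, the genuine content residing entirely in Theorems~\ref{oddcon} and~\ref{c3pn}. The only points requiring care are to state the monotonicity of $\rr$ correctly (a broader winning condition for Builder cannot increase the optimal number of rounds) and to check that the substitution $m=n-1$ meets the hypotheses of Theorem~\ref{oddcon}; I do not expect either to be a real obstacle.
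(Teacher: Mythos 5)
Your proposal is correct and matches the paper's argument: the paper likewise deduces the lower bound from $\rr(\Codd,\Con_{n,n-1})\le\rr(C_3,P_n)$ combined with Theorem~\ref{oddcon} at $m=n-1$, and takes the upper bound directly from Theorem~\ref{c3pn}. You merely spell out the monotonicity step that the paper labels ``clearly,'' which is fine.
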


It improves the best known multiplicative constants in the lower and upper bounds $2n-1\le \rr(C_3, P_n)\le 4n-5$, proved by  
Dybizba\'nski, Dzido and Zakrzewska \cite{dzido}. The upper and the lower bounds in Corollary \ref{cor1} are optimal for $n=3, 4$ 
since  $\rr(C_3, P_3)=5$ and $\rr(C_3, P_4)=8$, as it was verified (by computer) by  Gordinowicz and Pra\l at \cite{pral}.
We believe that the upper bound is sharp for every $n\ge 3$.

\begin{conj}
$\rr(C_3,P_n)=3n-4$ for every $n\ge 3$.
\end{conj}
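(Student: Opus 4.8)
Since Theorem \ref{c3pn} already gives $\rr(C_3,P_n)\le 3n-4$, proving the conjecture reduces to the matching lower bound $\rr(C_3,P_n)\ge 3n-4$ for all $n\ge 3$; the plan is to exhibit a Painter strategy in $\RR(C_3,P_n)$ that no Builder play can defeat in fewer than $3n-4$ rounds, since any single Painter strategy certifies such a lower bound. The obstacle is quantitative: Corollary \ref{cor1} gives only $\lceil(\varphi+1)n-2\varphi\rceil\approx 2.618n$, and in fact the golden-ratio strategy of Theorem \ref{oddcon} already yields $2.6n-3$ for \emph{every} tree $T_n$, not just paths. Hence any improvement to $3n-4$ must exploit that the blue target is a path, in which every vertex has degree at most $2$ — a feature that general trees do not share.

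The strategy I would use is a refinement of ``colour red unless a red $C_3$ is created''. That naive rule is too weak: Builder beats it in only $2n-1$ rounds by first building a red star $K_{1,n}$ at a hub $w$ (all edges forced red) and then playing the $n-1$ edges of a path among the leaves, each of which Painter must colour blue to avoid the triangle through $w$. The fix is to forbid this reuse of a single red hub: Painter colours an edge $uv$ red unless it closes a red triangle \emph{or} one of $u,v$ already carries large red degree, in which case Painter colours it blue. Blue edges created by the second rule tend to form blue stars, which contain no long blue path and are therefore harmless. Governing these rules by a potential function $\Phi$ that charges the length of the longest blue path together with the local red structure, I would aim to show that $\Phi$ grows by a controlled amount per round while a blue $P_n$ forces $\Phi$ to be large.

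The heart of the argument is an amortised accounting lemma: against this strategy, each increase by one in the number of vertices of the longest blue path costs Builder, on average, three edges — two red edges to manufacture a forced common neighbour (the hub rule prevents Painter from being trapped cheaply) and one blue edge to perform the extension — and, crucially, the red edges spent on one extension cannot be reused for another, because the strategy keeps red degrees, hence red components, bounded. Since a blue $P_n$ has $n-1$ blue edges, summing over the $n-1$ extensions gives roughly $3(n-1)$ rounds, and calibrating the additive constant against the base cases $\rr(C_3,P_3)=5$ and $\rr(C_3,P_4)=8$ (which already equal $3n-4$) should pin the bound down to exactly $3n-4$. An inductive formulation is equally natural: a strategy $S_n$ for $P_n$ would extend to $S_{n+1}$ by showing that prolonging the longest blue path by one vertex forces at least three further rounds.

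The main obstacle is precisely the passage from the coefficient $\varphi+1\approx 2.618$ to $3$. Two phenomena enable cheap Builder play — reuse of a red hub (the star trick above) and cheap \emph{merging} of two already-long blue paths by a single forced blue edge, which lets $\Phi$ jump — and a correct strategy must defeat both simultaneously while never being driven into a red $C_3$ prematurely. Known local potentials appear to saturate strictly below $3$, so I expect a purely local potential will not suffice; the likely route is a global discharging argument tracking the interaction between red degrees and the endpoints of the growing blue path, charging each path endpoint for the red edges needed to reach it. Making this charging leak-free across the entire game, and thereby upgrading the conjectural constant $3$ to a theorem, is the crux and the reason the statement is posed only as a conjecture.
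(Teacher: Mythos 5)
This statement is the paper's own open conjecture: the authors prove only the upper bound (Theorem~\ref{c3pn}) together with the lower bound $\lceil(\varphi+1)n-2\varphi\rceil$ of Theorem~\ref{oddcon}, and explicitly leave the matching lower bound open. So there is no proof in the paper to compare yours against; the question is whether your argument stands on its own, and it does not. You correctly reduce the conjecture to proving $\rr(C_3,P_n)\ge 3n-4$ via a Painter strategy, and you make the correct observation that such a proof must exploit path-specific structure (the paper's $2.6n-3$ bound holds for every tree). But what follows is a plan, not a proof: the potential $\Phi$ is never defined, the ``amortised accounting lemma'' is asserted rather than proved, and you concede in your final paragraph that making the charging argument leak-free is the crux. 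An unproved central lemma inside a lower-bound argument is not a proof of anything.

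The problem is worse than incompleteness: the specific Painter strategy you propose is refutable. Fix the degree threshold $d\ge 2$ implicit in your rule (``colour $uv$ blue if it closes a red triangle or one of $u,v$ has red degree at least $d$, else red''). Builder first plays $k$ vertex-disjoint red stars $K_{1,d}$ with hubs $w_1,\dots,w_k$; every such edge joins a hub of red degree below $d$ to an isolated vertex, so your Painter colours all $kd$ of them red, saturating every hub. Builder then joins the $d$ leaves of each star into a blue path ($d-1$ edges per star, each forced blue by the triangle rule through the hub --- note each red star is reused $d-1$ times, already contradicting your claim that red edges spent on one extension cannot be reused). Finally he splices the $k$ leaf-paths into one long blue path by routing through \emph{foreign} saturated hubs: joining an end $x$ of one leaf-path to an end $y$ of another via a hub $w_c$ of a third group costs two edges $xw_c$ and $w_cy$, both of which your rule colours blue because $w_c$ has red degree $d$; the hub $w_c$ thereby becomes an internal vertex of the blue path. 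With $k-1$ such splices (the assignment of foreign hubs is easy to arrange for $k\ge 3$), Builder obtains a blue path on $kd+k-1$ vertices after $kd+k(d-1)+2(k-1)=2(kd+k-1)-k$ rounds; choosing $kd+k-1= n$ yields a blue $P_n$ in $2n-k$ rounds, below even the $2n-1$ that the classical star trick achieves against the naive ``red unless triangle'' Painter, and far below $3n-4$. (If instead $d$ is taken so large that saturation is unaffordable, rule (ii) never fires and the classical star trick wins in $2n-1$ rounds.) The precise false step in your outline is the sentence claiming that blue edges created by the degree rule ``tend to form blue stars'' and are ``therefore harmless'': Builder plays only \emph{two} such edges at each saturated vertex, and two blue edges at a vertex are exactly what is needed to thread it into a long path. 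This is the general obstruction to local rules of this kind --- any condition under which Painter answers ``blue'' hands Builder reusable blue infrastructure --- and it is why the paper's own Painter strategy works instead with a global potential on red-bipartitions, which currently tops out at $(\varphi+1)n-O(1)$. Closing the gap between $\varphi+1\approx 2.618$ and $3$ genuinely remains open.
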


In view of Theorem \ref{c3pn}, we have $\rr(\Codd,\Con_{n,n-1})\le 3n-4$. 
In the last section we will prove the following upper bound on $\rr(\Codd,\Con_{n,m})$.

\begin{theorem}\label{conupper}
For $n\ge 3$ and $n-1\le m\le (n-1)^2/4$
$$\rr(\Codd,\Con_{n,m})\le n+2m+O(\sqrt{m-n+1}).$$
\end{theorem}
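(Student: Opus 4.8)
The plan is to have Builder play in two phases: first force a blue spanning path on $n$ vertices (which secures connectivity and $n-1$ blue edges at once), and then raise the blue edge count up to $m$ by playing inside a small set of vertices, where the permanent threat of a red odd cycle caps how many of those edges Painter can afford to colour red. Throughout, write $k=m-(n-1)$, so that $m=n-1+k$, the target bound is $n+2m=3n-2+2k$, and $\sqrt{m-n+1}=\sqrt k$; the whole burden of the second phase is to produce the $k$ ``extra'' blue edges.

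For Phase~1, Theorem~\ref{c3pn} lets Builder force either a red $C_3$ or a blue $P_n$ in at most $3n-4$ moves. A red $C_3$ is a red member of $\Codd$, so Builder has already won; otherwise he now owns a blue $P_n$ on a vertex set $V$ with $|V|=n$, and since every later blue edge with both endpoints in $V$ keeps the blue graph connected and spanning $V$, connectivity is no longer an issue and it suffices to accumulate $m$ blue edges inside $V$. For Phase~2, I would fix $S\subseteq V$ of size $s$ and have Builder play, one at a time, all still-uncoloured edges inside $S$, stopping the instant the blue graph inside $V$ has $m$ edges (if a red odd cycle ever appears, Builder wins at once). The crucial observation is that Painter must keep the red graph free of odd cycles, hence bipartite, so the red edges inside $S$ number at most $\lf s^2/4\rf$; therefore at least $\binom s2-\lf s^2/4\rf$ of the edges inside $S$ are blue. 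Writing $p$ for the number of (blue) path edges lying inside $S$, the blue edges inside $S$ beyond the path number at least $\binom s2-\lf s^2/4\rf-p$, and together with the $n-1$ path edges this yields at least $m$ blue edges provided
\begin{equation}\tag{$*$}
\binom s2-\lf s^2/4\rf-p\ge k .
\end{equation}
Because at most $\lf s^2/4\rf$ edges inside $S$ are ever red, Builder collects his $k$ extra blue edges within at most $k+\lf s^2/4\rf$ moves in this phase.

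It remains to choose $S$. In $P_n$ one can select $s$ vertices spanning as few path edges as possible, namely $p=\max\{0,\,2s-n-1\}$, and I would take $s$ minimal so that $(*)$ holds with this value of $p$. The hypothesis $m\le (n-1)^2/4$ is exactly what makes this achievable with $s\le n$: a short parity computation gives $\binom n2-\lf n^2/4\rf=\lf (n-1)^2/4\rf\ge m$, which is precisely $(*)$ at $s=n$ (where $p=n-1$). With $s$ chosen minimally, the left-hand side of $(*)$ is below $k$ at $s-1$ and grows by only $O(s)=O(\sqrt k)$ as $s$ increases by one, so $s\le 2\sqrt k+O(1)$ and hence $\lf s^2/4\rf\le k+O(\sqrt k)$. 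Adding the two phases,
\[
\rr(\Codd,\Con_{n,m})\le (3n-4)+\big(k+\lf s^2/4\rf\big)\le 3n-4+2k+O(\sqrt k)=n+2m+O(\sqrt k).
\]

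The main obstacle I expect is the Phase~2 bookkeeping rather than any new idea: one must confirm that edges inside $S$ already coloured during Phase~1 cause no trouble (blue ones only reduce how many Builder must still win, and red ones are already subsumed in the $\lf s^2/4\rf$ bipartite budget), and one must verify $(*)$ together with $\lf s^2/4\rf=k+O(\sqrt k)$ in both regimes $p=0$ and $p=2s-n-1$, which is where the constant in the $O(\sqrt k)$ term is pinned down. The conceptual core is simply that building the path first makes connectivity automatic, after which the bipartiteness forced by the odd-cycle threat delivers a quadratic-in-$s$ supply of blue edges at the near-optimal rate of two moves per edge, which is exactly what matches the leading term $n+2m$.
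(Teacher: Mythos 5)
Your proposal is correct and follows essentially the same route as the paper: Phase~1 invokes Theorem~\ref{c3pn} to obtain the connected blue graph on $n$ vertices, and Phase~2 completes a clique on $\Theta(\sqrt{m-n+1})$ vertices, where the prohibition on red odd cycles guarantees that at least $\binom{s}{2}-\lfloor s^2/4\rfloor$ of its edges are blue. The only immaterial differences are that the paper takes a \emph{connected} blue subgraph on $k$ vertices and cites Tur\'an's theorem for triangle-free graphs, whereas you take a set spread out along the path and use bipartiteness of the red graph -- both give the same $\lfloor s^2/4\rfloor$ budget and the same final bound.
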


\section{Preliminaries}

For a subgraph of $G$ induced on $V'\subs V(G)$, we denote the set of its edges by $E[V']$. If $V'\subs V(G)$ is empty, then
we define $E[V']=\emptyset$.

We say that a graph $H$ is coloured if every its edge is blue or red. A graph is red (or blue) if all its edges are red (blue). 
Let $G$ be a coloured graph. We say $G$ is red-bipartite if there exists a partition $V(G)=V_1\cup V_2$ (one of the two sets may be empty) such that there are no blue edges of $G$ between $V_1$ and $V_2$ and there are no red edges in $E[V_1]\cup E[V_2]$.
A pair of such sets $(V_1, V_2)$ is called a red-bipartition of $G$.
It is not hard to observe that a coloured graph $G$ is red-bipartite if and only if $G$ has no cycle with an odd number of red edges. 
Furthermore, every component of a coloured graph has at most one red-bipartition up to the order in the pair $(V_1, V_2)$.

\section{Proof of Theorem \ref{oddcon}}\label{lower}

Let $n,m\in\N$. Consider the following auxiliary game $\cG(n,m)$.  In every round Builder chooses a previously unselected 
edge from $K_\N$ and Painter colours it red or blue. 
The game ends if after a move of Painter there is a coloured cycle with an odd number of red edges or 
there exists a coloured connected graph $H$ with a red-bipartition $(V_1, V_2)$ such that for some $i\in \{1,2\}$ we have $|V_i|\ge n$ 
and $|E[V_i]|\ge m$. Builder tries to finish the game as soon as possible, while the goal of Painter is the opposite.
Let $\rr_{\cG(n,m)}$ be the minimum number of rounds in the game $\cG(n,m)$ provided both players play optimally. 

Clearly if there is a red odd cycle or a connected blue graph with $n$ vertices and $m$ edges on the board, then the game $\cG(n,m)$ ends so
$$\rr_{\cG(n,m)}\le \rr(\Codd,\Con_{n,m}).$$ 
Therefore in order to prove Theorem \ref{oddcon} it is enough to prove that $\rr_{\cG(n,m)}\ge \varphi n + m-2\varphi+1$.

We will define a strategy of Painter in $\cG(n,m)$ based on a potential function and prove that the potential function does not grow too much 
during the game. Let us define a function $f$ on the family of all coloured red-bipartite subgraphs of $K_{\N}$. 

Suppose $G=(V, E)$ is a coloured red-bipartite graph.  
If $G$ is an isolated vertex, then let $f(G)=0$. If $G$ is connected, with the red-bipartition $(V_1,V_2)$ and $|V|>1$, let 
\begin{eqnarray*}
p_G(V_i)&=&|V_i|\varphi+|E[V_i]|,\text{ for } i=1,2;\\
a(G)&=&\max(p_G(V_1),p_G(V_2)),\\ 
b(G)&=&\min(p_G(V_1),p_G(V_2)),\\
f(G)&=&\varphi a(G)-\varphi+ \max(a(G)-\varphi^3,b(G)).
\end{eqnarray*}
Finally, if $G$ consists of components $G_1, G_2,\ldots,  G_t$, then we put
$$f(G)=\sum_{i=1}^t f(G_i).$$

The motivation behind the choice of the potential function $f$ is described in Appendix \ref{GAD}.

We will use also the following function $g:\R^2\to\R$
$$g(x,y)=\varphi \max(x,y)-\varphi+ \max(\max(x,y)-\varphi^3,\min(x,y)).$$
Note that $g$ is symmetric, nondecreasing with respect to $x$ (and $y$) and if $(V_1,V_2)$ is a red-bipartition of a connected, coloured graph $H$, then
$g(p_H(V_1),p_H(V_2))=f(H)$. We can also rewrite $g$ as
$$g(x,y)=x+y-
\varphi+(\varphi-1)\max(x,y)+\max(x-y-\varphi^3,y-x-\varphi^3,0),$$
so $g$ is convex.

We are ready to present the strategy of Painter in $\cG(n,m)$. She will play so that after every round the coloured graph $G=(V(K_{\N}), E)$ 
containing all coloured edges is red-bipartite. Furthermore, we will show that it is possible to colour edges selected by Builder so that 
after every round the potential $f(G)$ increases by not more than $\varphi+1$.   The inductive argument is stated in the following lemma.

\begin{lemma}\label{gd3p}
Let $G$ be a coloured subgraph of $K_\N$ with $V(G)=V(K_\N)$ and the edge set consisting of all edges coloured within  $t\ge 0$ rounds of the game $\cG(n,m)$. Suppose that $G$ is red-bipartite and $e$ is an edge selected by Builder in round $t+1$. 
Then Painter can colour $e$ red or blue so that the obtained coloured graph $G'$ with $|E(G)|+1$ edges is red-bipartite and
$f(G')\le f(G)+\varphi+1$.
\end{lemma}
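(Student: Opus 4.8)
The lemma is the one-round inductive step, so I only need to bound the cost $f(G')-f(G)$ of a single edge $e=uv$. The plan is to classify $e$ according to how its endpoints meet the components of $G$, and in each case exhibit an admissible colour (one that keeps $G'$ red-bipartite, i.e.\ avoids a cycle with an odd number of red edges) of cost at most $\varphi+1$. Throughout I use that $g$ is symmetric, convex and nondecreasing, that $f(H)=g(p_H(V_1),p_H(V_2))$ on a component, and the identities $\varphi^2=\varphi+1$, $\varphi^3=2\varphi+1$, $\varphi^3-\varphi=\varphi^2$. The single most useful fact is a bound on the one-sided slopes of $g$: expanding the terms $(\varphi-1)\max(x,y)$ and $\max(x-y-\varphi^3,\,y-x-\varphi^3,\,0)$ and using that a sum of maxima is the maximum of the sums, $g$ is the maximum of six affine functions whose coefficients of $x$ (and of $y$) lie in $\{0,\varphi-1,1,\varphi,2,\varphi^2\}$; hence $g(x+t,y)-g(x,y)\le(\varphi+1)t$ for $t\ge0$, with slope at most $1$ in the smaller coordinate and the steep value $\varphi+1=\varphi^2$ (the piece $\varphi^2x-\varphi-\varphi^3$) attained only in the larger coordinate and only once the gap exceeds $\varphi^3$.

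I would first dispose of the degenerate cases. If both endpoints of $e$ are isolated, Painter colours $e$ red and the new component contributes $g(\varphi,\varphi)=\varphi^2=\varphi+1$, exactly the budget. If both endpoints lie in one component $C$ with red-bipartition $(V_1,V_2)$, the colour is forced to preserve red-bipartiteness: if $u,v$ are on opposite sides, Painter colours $e$ red, a crossing edge that changes no $p_C(V_i)$, so $f$ is unchanged; if $u,v$ are on the same side, Painter colours $e$ blue, which adds $1$ to that side's potential, and by the slope bound the cost is at most $\varphi+1$.

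Next comes the case where $e$ joins an isolated vertex $u$ to a component $C$, with $v\in V_1$, and I write $x=p_C(V_1)$, $y=p_C(V_2)$. Both colours preserve red-bipartiteness, and red sends $(x,y)\mapsto(x,y+\varphi)$ while blue sends $(x,y)\mapsto(x+\varphi^2,y)$. I claim red is admissible whenever $y-x\le\varphi^2$ and blue whenever $y-x\ge\varphi^2$, so that the two ranges cover everything. For red with $y-x\le\varphi^2$ one checks from the convex formula that the $\max$-term grows by at most $(\varphi-1)\varphi$ and the overlap term stays at $0$ (this is exactly where $\varphi^3-\varphi=\varphi^2$ is used), giving cost at most $\varphi+(\varphi-1)\varphi=\varphi^2$; for blue with $y-x\ge\varphi^2$ the increment $\varphi^2$ lands in the smaller coordinate, neither the $\max$- nor the overlap term grows, and the cost is exactly $\varphi^2$.

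The main obstacle is the last case, when $e$ joins two distinct nontrivial components $C$ and $D$, with $v\in V_1$, $u\in U_1$, and potentials $x_1=p_C(V_1)$, $x_2=p_C(V_2)$, $y_1=p_D(U_1)$, $y_2=p_D(U_2)$. Both colours are admissible; red produces a component with potentials $(x_1+y_2,\,x_2+y_1)$ and blue one with $(x_1+y_1+1,\,x_2+y_2)$, so the step reduces to the purely real inequality
\[
\min\bigl(g(x_1+y_2,\,x_2+y_1),\ g(x_1+y_1+1,\,x_2+y_2)\bigr)\ \le\ g(x_1,x_2)+g(y_1,y_2)+\varphi+1 .
\]
I would prove this by feeding the six-affine-piece description of $g$ into both sides: the right-hand side is a maximum of affine functions of $(x_1,x_2,y_1,y_2)$, and it suffices to dominate, for whichever colour is cheaper, each of the six affine pieces of the corresponding $g$. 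The heuristic guiding the choice is that red should pair the heavier side of one component with the lighter side of the other while blue aligns two light sides, so that the active piece of the chosen colour is never the steep $\varphi^2x-\varphi-\varphi^3$; the bookkeeping then collapses via $\varphi^2=\varphi+1$ and $\varphi^3=2\varphi+1$, typically with slack about $1$. This four-variable comparison, split along the orderings of $x_1$ versus $x_2$ and $y_1$ versus $y_2$ and along whether the gaps exceed $\varphi^3$, is where essentially all the work lies; the remaining cases are immediate once the slope bound is in hand.
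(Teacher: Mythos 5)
Your overall architecture is the same as the paper's: case split on how the endpoints of $e$ meet the components, the slope bound $g(x+t,y)-g(x,y)\le(\varphi+1)t$, and the observation that the steep piece $\varphi^2x-\varphi-\varphi^3$ is only active past a gap of $\varphi^3$. Your treatment of the degenerate cases, the same-component case, and the isolated-vertex-meets-component case (with the threshold $y-x\lessgtr\varphi^2=\varphi+1$, exactly the paper's) is correct and complete. You also correctly reduce the remaining case to the inequality
\[
\min\bigl(g(x_1+y_2,\,x_2+y_1),\ g(x_1+y_1+1,\,x_2+y_2)\bigr)\ \le\ g(x_1,x_2)+g(y_1,y_2)+\varphi+1 .
\]

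The gap is that this inequality --- which you yourself identify as ``where essentially all the work lies'' --- is not proved; you offer only a heuristic for which colour should win and a promise that a six-affine-piece bookkeeping ``collapses.'' Two things make the sketch insufficient as it stands. First, the case boundaries you propose (orderings of $x_1$ vs.\ $x_2$, $y_1$ vs.\ $y_2$, and gaps versus $\varphi^3$) are not where the colour choice actually switches: in the paper the blue/red decision in the oppositely-oriented configuration turns on whether \emph{both} imbalances exceed $1$ (not $\varphi^3$), so a verification organized around your regions would still have to discover and handle this extra threshold, and the heuristic ``blue aligns two light sides'' does not match the actual rule (blue is used precisely when $u$ sits on the strictly heavy side of its component, $u'$ on the strictly light side of its component, and both imbalances exceed $1$). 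Second, you miss the structural tool that makes the verification short: the translation identity $g(x+z,y+z)=g(x,y)+(\varphi+1)z$, which lets one subtract $b+b'$ (and the analogous shifts) from both sides and reduce the four-variable inequality to a two-variable one in the imbalances $c=|x_1-x_2|$, $c'=|y_1-y_2|$; after that each subcase is a one-line comparison such as $\max(c-\varphi^3,c'\varphi)<\max(c-\varphi^3,c')$ yielding an immediate contradiction. Without this reduction your plan is a genuinely large polyhedral case check that you have not carried out, so the central subcase of the lemma remains unestablished.
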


\begin{proof}

Let $G$ satisfy the assumptions of the lemma, let $e=uu'$ be an edge selected in round $t+1$, and suppose that $H,H'$ are the components of $G$ such that $u\in H$, $u'\in H'$, and $(V_1, V_2)$, $(V_1', V_2')$ are red-bipartitions of $H$ and $H'$, respectively.

We consider several cases. Below by $F+e_{red}$ and $F+e_{blue}$ we denote the coloured graphs obtain by adding $e$ to a coloured graph $F$ provided $e$ is coloured red or blue, respectively.   
\begin{enumerate}

\item 
$u,u'$ are in the same connected component of $G$, i.e. $H=H'$. 

If $u$ and $u'$ are in different parts of the red-bipartition, then Painter colours $e$ red. Then 
$f(H+e_{red})=f(H)$ and $f(G+e_{red})=f(G)$.

If $u$ and $u'$ are in the same part of the red-bipartition, say $u,u'\in V_1$, then Painter colours $e$ blue. Then we obtain the component $H''=H+e_{blue}$ with
$p_{H''}(V_1)=p_H(V_1)+1$ and $p_{H''}(V_2)=p_H(V_2)$. Therefore 
$f(H'')\le f(H)+\varphi+1$ and $f(G+e_{blue})\le f(G)+\varphi+1$.

\item 
$u,u'$ are isolated vertices in $G$. 

Then Painter colours $e$ red. The obtained graph  $G+e_{red}$ is red-bipartite and 
$$f(G+e_{red})=f(G)+\varphi \varphi-\varphi+\varphi=f(G)+\varphi^2=f(G)+\varphi+1.$$

\item
$u'$ is isolated in $G$, but $u$ is not. We may assume that $u\in V_1$.

\begin{enumerate}
\item[a.]  
Suppose that $p_H(V_2)>p_H(V_1)+\varphi+1$.
Then Painter colours $e$ blue. The obtained graph  $G+e_{blue}$ is red-bipartite and for $F=H+e_{blue}$ we have
$$a(F)=\max(p_{F}(V_1\cup\{u'\}),p_{F}(V_2))=\max(p_H(V_1)+\varphi+1,p_H(V_2))=p_H(V_2)=a(H)$$ 
and
$$b(F)=p_{F}(V_1\cup \{u'\})=b(H)+\varphi+1.$$ 
Therefore $f(G+e_{blue})\le f(G)+\varphi+1$.

\item[b.]  
Suppose that $p_H(V_2)\le p_H(V_1)+\varphi+1$.
Then Painter colours $e$ red. Thus $u'$ is added to $V_2$ in the red-bipartition of $F=H+e_{red}$ and
$$p_{F}(V_2+u')=p_H(V_2)+\varphi.$$

In order to estimate $f(F)$, we calculate $dg/dy(x_1,\cdot)$ for a fixed $x_1\in\R$. By definition of $g$ we have
$$
\frac{dg}{dy}(x_1,y)=\begin{cases}
0& \text{ for } y<x_1-\varphi^3,\\
1& \text{ for } x_1-\varphi^3< y<x_1,\\
\varphi& \text{ for } x_1< y<x_1+\varphi^3,\\
\varphi^2& \text{ for } y> x_1+\varphi^3.\\
\end{cases}
$$
Thus, for every $k\in(0,\varphi)$, in view of the fact that $p_H(V_2)+k\le p_H(V_1)+\varphi+1+k<p_H(V_1)+\varphi^3$, we have
$$\frac{dg}{dy}(p_H(V_1),p_H(V_2)+k)\le \varphi.$$ 
Therefore
\begin{eqnarray*}
f(F)-f(H)&=& g(p_{F}(V_1),p_{F}(V_2\cup \{u'\}))-g(p_H(V_1),p_H(V_2))\\
&=& g(p_{H}(V_1),p_{H}(V_2)+\varphi)-g(p_H(V_1),p_H(V_2))\le \varphi^2=\varphi+1.
\end{eqnarray*}

\end{enumerate}

\item \label{nieizol}
$u$ and $u'$ are not isolated in $G$ and $H\neq H'$. 

We may assume that $u\in V_1$ and $u'\in V_1'$. 
Let $a=\max(p_H(V_1),p_H(V_2))$, $b=\min(p_H(V_1),p_H(V_2))$, $c=a-b$,
$a'=\max(p_{H'}(V_1'),p_{H'}(V_2'))$, $b'=\min(p_{H'}(V_1'),p(V_2'))$ and $c'=a'-b'$.

\begin{enumerate}
\item[a.]
Suppose that $p_H(V_1)>p_H(V_2)+1$ and $p_{H'}(V_1')+1<p_{H'}(V_2')$.
Then Painter colours $e$ blue. Thus the components $H,H'$ of $G$ are joined into a component $F$ of $G+e_{blue}$, with the
red-bipartition $(V_1\cup V_1', V_2\cup V_2')$ of $F$. 
Notice that $p_H(V_1)=a$, $p_H(V_2)=b$,  $p_{H'}(V_1')=b'$, $p_{H'}(V_2')=a'$ and $c,c'>1$. Furthermore  
\begin{eqnarray*}
p_F(V_1\cup V_1')&=& p_H(V_1)+p_{H'}(V_1')+1=a+b'+1,\\
p_F(V_2\cup V_2')&=& p_H(V_2)+p_{H'}(V_2')=a'+b.
\end{eqnarray*}
Assume to the contrary that $f(G+e_{blue})>f(G)+\varphi+1$. This means that
$$f(H)+f(H')+\varphi+1< f(F),$$
or equivalently    
$$g(a,b)+g(a',b')+\varphi+1< g(a+b'+1,a'+b).$$
The above inequality and a general property of $g$ that $g(x+z,y+z)=g(x,y)+(\varphi+1)z$, imply that 
$$g(c,0)+g(c',0)+\varphi^2< g(c+1,c').$$
If $c<c'$, then we can swap $c$ and $c'$ and the left-hand side will not change while the right-hand side will get bigger (by the fact that $g$ is convex and symmetric). It means that we may assume that $c\ge c'$. Thus in view of the definition of $g$ we get
\begin{eqnarray*}
c\varphi+\max(c-\varphi^3,0)+c'\varphi+\max(c'-\varphi^3,0)-2\varphi+\varphi^2\\
<(c+1)\varphi-\varphi+\max(c+1-\varphi^3,c'),
\end{eqnarray*}
and hence 
$$\max(c-\varphi^3,0)+(c'-1)\varphi+\max(c'-\varphi^3,0)<\max(c-\varphi^3,c'-1).$$
This inequality implies that $\max(c-\varphi^3,(c'-1)\varphi)<\max(c-\varphi^3,c'-1)$, which for $c'>1$ leads to a contradiction.
Thereby we proved that $f(G+e_{blue})\le f(G)+\varphi+1$.

\item[b.]
Suppose that $p_H(V_1)>p_H(V_2)$ and $p_{H'}(V_1')<p_{H'}(V_2')$ but case \ref{nieizol}a does not hold, i.e. 
$p_H(V_1)\le p_H(V_2)+1$ or $p_{H'}(V_1')+1\ge p_{H'}(V_2')$. 
Then Painter colours $e$ red. Similarly to the previous case, the components $H,H'$ of $G$ are joined into a component $F$ of $G+e_{red}$ but the red-bipartition of $F$ is $(V_1\cup V_2', V_2\cup V_1')$ and 
$$
p_F(V_1\cup V_2')=a+a',\
p_F(V_2\cup V_1')= b+b'.
$$

Again, assume the contrary that $f(G+e_{red})>f(G)+\varphi+1$, which is equivalent to 
$f(H)+f(H')+\varphi+1< f(F)$ and hence
$$g(a,b)+g(a',b')+\varphi^2< g(a+a',b+b').$$
Then we get
$$g(c,0)+g(c',0)+\varphi+1<g(c+c',0)$$
and in view of the definition of $g$
$$\max(c-\varphi^3,0)+\max(c'-\varphi^3,0)+1<\max(c+c'-\varphi^3,0).$$
By the assumptions of this case we have $c\le 1$ or $c'\le 1$ so the above inequality cannot hold.
\end{enumerate}

Because of the symmetric role of $H$ and $H'$, cases \ref{nieizol}a and   \ref{nieizol}b cover all situations with
$(p_H(V_1)-p_H(V_2))(p_{H'}(V_1')-p_{H'}(V_2'))<0$. It remains to analyse the opposite case.

\begin{enumerate}
\item[c.] Assume that $(p(V_1)-p(V_2))(p(V_1')-p(V_2'))\ge 0$. 
Then Painter colours $e$ red. As in the previous case, the components $H,H'$ of $G$ are joined into a component $F$ of $G+e_{red}$  with its red-bipartition $(V_1\cup V_2', V_2\cup V_1')$ but we have either
$$
p_F(V_1\cup V_2')=a+b'\text{ and }
p_F(V_2\cup V_1')= b+a',
$$
or     
$$
p_F(V_1\cup V_2')=b+a'\text{ and }
p_F(V_2\cup V_1')=a+b'.
$$
In both cases, by the symmetry of the function $g$, the assumption that $f(H)+f(H')+\varphi+1< f(F)$ leads to 
$$g(a,b)+g(a',b')+\varphi+1<g(a+b',b+a').$$
Then
$$g(c,0)+g(c',0)+\varphi+1<g(c,c').$$
The inequality is symmetric with respect to $c$ and $c'$ so we may assume $c\ge c'$. Then the above inequality is equivalent to
$$\max(c-\varphi^3,0)+\max(c'-\varphi^3,0)+c'\varphi+1<\max(c-\varphi^3,c').$$
It implies that   $\max(c-\varphi^3,c'\varphi)<\max(c-\varphi^3,c')$  and again we get a contradiction.
\end{enumerate}

\end{enumerate}

\end{proof}

We have proved that Painter has a strategy in $\cG(n,m)$ such that after every round of the game $\cG(n,m)$ 
the graph induced by the set of all coloured edges is red-bipartite and in every round its potential $f$ increases by not more than $\varphi+1$
(at the start of the game the potential is 0). We infer that a coloured cycle with an odd number of red edges never appears in the game. 
Suppose that (given the described strategy of Painter and any strategy of Builder) after $t$ rounds of the game 
the graph $G$ induced by the set of all coloured edges contains a component $H$ with a red bipartition $(V_1, V_2)$ such that 
$|V_1|\ge n$ and $|E[V_1]|\ge m$.  On the one hand, we have $f(G)\le t(\varphi+1)$. On the other hand, we have
$$f(G)\ge f(H)=\varphi a(H)-\varphi+ \max(a(H)-\varphi^3,b(H))\ge (\varphi+1)a(H)-\varphi-\varphi^3
\ge (\varphi+1)(\varphi n+m)-\varphi-\varphi^3.$$
Therefore 
$$t\ge \frac{(\varphi+1)(\varphi n+m)-\varphi-\varphi^3}{\varphi+1}=\varphi n+m-2\varphi+1.$$
We conclude that Painter can survive at least $\varphi n+m-2\varphi+1$ in the game $\cG(n,m)$. In view of previous remarks, 
it proves also that $\rr(\Codd,\Con_{n,m})\ge \varphi n+m-2\varphi+1$.

\section{Proof of Theorem \ref{c3pn}}\label{upper}

If $n=3$, then we know that $\rr(C_3,P_n)=5=3n-4$. One can also check that Builder can force a red $C_3$ or a blue $P_3$
playing on $K_5$ only -- we will use this fact later.  

Throughout this section, we assume that $n\ge 4$. While considering a moment of the game $\RR(C_3, P_n)$ we say 
that Builder connects two paths $P$ and $P'$ if in the considered round he selects an edge incident to an end of $P$ and an end of $P'$. We say that Builder can force a blue graph $H$ within $t$ rounds if Builder has a strategy such that after at most $t$ next rounds a red copy of $C_3$ or a blue copy of $H$ is created. In other words, we assume that Painter avoids moves that lose immediately (if possible). 

Let $P(s,t)$ be a coloured path on $s+t$ vertices obtained from two vertex-disjoint blue paths $P_s$ and $P_t$ by connecting their 
ends with a red edge. Moreover, let $P(s,0)$ denote a blue path on $s$ vertices.
A maximal (in sense of inclusion) coloured path is called a $brb$-path if it is isomorphic to $P(s,t)$ for some $s,t>0$. 
We say that a blue path is pure if it is maximal (in sense of inclusion) and none of its vertices is incident to a red edge.

Let $u(P(s,t))=|s-t|$ for every $s,t\ge 0$. 
Thus $u$ is a measure of the path ``imbalance''. We say that $P(s,t)$ is balanced if $u(P(s,t))=0$ and imbalanced otherwise. 

We start with the following observation. 

\begin{lemma} \label{phase2}
Suppose that $a_i\ge b_i$ for $i=0,1,\ldots,k$ and a coloured graph $G$ contains vertex-disjoint paths $P(a_0,b_0),P(a_1,b_1),\ldots,P(a_k,b_k)$. 
Then Builder can force either a red $C_3$ or a blue path on $a_0+\sum_{i=1}^k b_k$ vertices within at most $2k$ rounds.
\end{lemma}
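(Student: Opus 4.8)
The plan is to maintain a single growing blue path and to absorb the paths $P(a_1,b_1),\dots,P(a_k,b_k)$ into it one at a time, spending at most two rounds on each. I would start from the blue sub-path on $a_0$ vertices sitting inside $P(a_0,b_0)$ and keep the invariant that, after processing $j$ of the remaining paths, Builder has created a blue path $M$ on at least $a_0+\sum_{i=1}^{j}b_i$ vertices together with a distinguished endpoint $c$ of $M$ from which he continues to build. The base case $j=0$ is immediate, and after $j=k$ the path $M$ has at least $a_0+\sum_{i=1}^{k}b_i$ vertices while at most $2k$ rounds have been used, so it suffices to describe how to absorb one path in two rounds.

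Write each $P(a_i,b_i)$ as a blue path on $a_i$ vertices and a blue path on $b_i$ vertices joined by the red edge $r_ir_i'$, with $r_i$ on the $a_i$-part and $r_i'$ on the $b_i$-part. To absorb $P(a_i,b_i)$ I would have Builder \emph{connect} $c$ to $r_i'$, the endpoint of the short blue part that carries the red edge. If Painter colours $cr_i'$ blue, then $M$ runs along the entire $b_i$-part, gaining $b_i$ vertices in a single round, and its new growing end is the far end of that part. If Painter colours $cr_i'$ red, Builder plays $cr_i$ in the second round. Since $cr_i'$ and $r_i'r_i$ are now both red, colouring $cr_i$ red would close the red triangle $cr_i'r_i$, a red $C_3$; hence Painter must colour $cr_i$ blue, and then $M$ is extended \emph{through} $c$ along the $a_i$-part, gaining $a_i\ge b_i$ vertices, with the far end of the $a_i$-part becoming the new growing end. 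Either way Builder uses at most two rounds and extends $M$ by at least $b_i$ vertices while keeping a usable endpoint, which is exactly what the invariant needs; note that $c$ need only be a blue endpoint of $M$, not a vertex free of red edges, so the argument runs uniformly even when some $b_i=1$.

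The one genuinely delicate step is the red case, and getting it right is the whole point. The naive move is to attach $M$ at the \emph{clean} far end of the short part; but then a red answer produces an isolated red edge with no triangle in reach, and any attempt to repair it splices two unwanted pieces together and \emph{strands} $M$, destroying the single long path we are after. Attaching at $r_i'$ instead makes the pre-existing red edge $r_i'r_i$ serve as one side of a forced triangle, so the single follow-up edge $cr_i$ either completes a red $C_3$ or, being forced blue, reroutes $M$ productively through $c$ rather than bypassing it — this is precisely why no stranding occurs and why the hypothesis $a_i\ge b_i$ guarantees the promised gain of at least $b_i$. I would therefore present the proof as this two-round absorption applied $k$ times, so that Builder forces either a red $C_3$ along the way or, failing that, a blue path on $a_0+\sum_{i=1}^{k}b_i$ vertices within $2k$ rounds.
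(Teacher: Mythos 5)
Your proof is correct and uses essentially the same idea as the paper's: extend the growing blue path by joining its free endpoint to the two ends of the red edge of the next $brb$-path, so that the pre-existing red edge forces at least one of these new edges to be blue, gaining at least $b_i$ vertices in at most two rounds. The only differences are presentational --- you play the second edge adaptively and phrase the argument as an invariant, while the paper selects both edges and inducts on $k$.
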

\begin{proof}
We prove it by induction. The case $k=0$ is trivial. If $k>0$, then Builder can connect the end of the blue path $P_{a_0}$ 
with both ends of a red edge in $P(a_k,b_k)$ and force a blue path of length at least $a_0+b_k$. 
Now we have vertex-disjoint paths $P(a_0+b_k,0),P(a_1,b_1),\ldots,P(a_{k-1},b_{k-1})$, so by the induction hypothesis 
Builder can force a blue path of length $a_0+b_k+\sum_{i=1}^{k-1} b_i$ in the next $2k-2$ moves.
\end{proof}

Consider the following strategy for Builder in $\RR(C_3, P_n)$. 
Builder will assume that the board of the game is $K_{2n-1}$, other edges of $K_\N$ are ignored. 
We divide the game into two stages. Roughly speaking, in Stage 1 Builder plays so that many $brb$-paths are created and  
one of the coloured paths is more imbalanced than all the others put together. In Stage 2, Builder applies his strategy from 
Lemma \ref{phase2} and forces a long blue path.

More precisely, at the beginning of the game, we have $2n-1$ pure blue paths (they are trivial).  
In every round of Stage 1, Builder connects two shortest pure blue paths.  
Painter colours the selected edge red or blue. 
Builder continues doing so as long as there are at least two pure blue paths on the board, with one exception: 
If Painter colours the first $n-3$ edges red, then Stage 1 ends.

If this exception happens,  the coloured graph consists of $n-3$ isolated red edges and five isolated vertices $u_1,\ldots,u_5$. 
The game proceeds to Stage 2. 
Builder forces a blue $P_3$ within five next rounds, using only edges of the graph $K_5$ on the vertex set $\{u_1,\ldots,u_5\}$. 
After that the coloured graph contains a path $P(3,0)$ and $n-3$ copies of $P(1,1)$, 
so in view of Lemma \ref{phase2} Builder can force a blue path on $3+n-3=n$
vertices or a red $C_3$ within next $2(n-3)$ rounds. Thus in this case the game ends after at most $n-3+5+2(n-3)=3n-4$ rounds.

Further we assume that Painter colours blue at least one of the edges selected in the first $n-3$ rounds. 
Stage 1 ends when there is at most one pure blue path on the board. Observe that at the end of Stage 1
the graph induced by all coloured edges consists of vertex-disjoint $brb$-paths and at most one pure blue path.

Since Builder always connects two shortest pure blue paths, we infer that the following holds. 

\begin{fact} \label{times2}
After every round of Stage 1, if $P_k$, $P_l$ are pure blue paths, then $k\le 2l$.
\end{fact}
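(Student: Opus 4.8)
The plan is to prove this by induction on the number of rounds played in Stage~1. I first observe that the stated condition is equivalent to the single inequality $\ell_p\le 2\ell_1$, where $\ell_1\le\ell_2\le\cdots\le\ell_p$ denote the numbers of vertices of the pure blue paths present at the board: taking $k=\ell_p$ and $l=\ell_1$ recovers the extremal pair, and conversely any pair $(P_k,P_l)$ of pure blue paths satisfies $k\le\ell_p\le 2\ell_1\le 2l$. So it suffices to track the multiset $\{\ell_1,\ldots,\ell_p\}$ and to show that $\ell_p\le 2\ell_1$ holds after every round of Stage~1.

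The base case is immediate: at the start of the game every pure blue path is a single vertex, so all $\ell_i$ equal $1$ and the inequality holds. For the inductive step, assume $\ell_p\le 2\ell_1$ after some round and consider Builder's next move, in which he connects the two shortest pure blue paths, on $\ell_1$ and $\ell_2$ vertices, by a new edge. If Painter colours this edge red, both of these paths gain a vertex incident to a red edge and so cease to be pure; the surviving pure blue paths then form the sub-multiset $\{\ell_3,\ldots,\ell_p\}$ of the previous one, for which the desired inequality is inherited directly from the induction hypothesis. This case is trivial.

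The real content lies in the blue case, where $P_{\ell_1}$ and $P_{\ell_2}$ merge into a single pure blue path on $\ell_1+\ell_2$ vertices, giving the new multiset $\{\ell_1+\ell_2,\ell_3,\ldots,\ell_p\}$. I would first argue that the merged path is the longest one present: since $\ell_2\ge\ell_1$ and, by the induction hypothesis, $\ell_p\le 2\ell_1$, we have $\ell_1+\ell_2\ge 2\ell_1\ge\ell_p\ge\ell_i$ for every $i$. If $p=2$ then only one path remains and there is nothing to check; if $p\ge 3$, the shortest surviving path has $\ell_3$ vertices, and from $\ell_1\le\ell_2\le\ell_3$ we obtain $\ell_1+\ell_2\le 2\ell_3$, which is precisely the invariant for the new configuration. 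The one point requiring care---and the exact reason Builder always connects the \emph{two shortest} paths---is that this estimate simultaneously needs $\ell_2\le\ell_3$ (so that the merged length is at most $2\ell_3$) and the prior bound $\ell_p\le 2\ell_1$ (so that the merged path, although it may be longer than every previous path, is still the new maximum). Combining the red and blue cases closes the induction and establishes the fact.
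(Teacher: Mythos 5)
Your proof is correct. The paper gives no explicit argument for this fact (it is asserted as an immediate consequence of Builder always connecting the two shortest pure blue paths), and your induction on rounds --- reducing the statement to the invariant $\ell_p\le 2\ell_1$, discarding the two shortest paths in the red case, and using $\ell_1+\ell_2\ge 2\ell_1\ge\ell_p$ together with $\ell_1+\ell_2\le 2\ell_3$ in the blue case --- is a sound and complete formalization of exactly the reasoning the paper leaves implicit.
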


Let us verify that also the following is true.

\begin{fact} \label{most}
After every round of Stage 1, there is at most one pure blue path in which the number of vertices is not a power of $2$.
\end{fact}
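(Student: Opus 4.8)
The plan is to argue by induction on the rounds of Stage~1, keeping track of the multiset $S$ of the numbers of vertices of the pure blue paths currently present on the board. At the start of Stage~1 every pure blue path is trivial, so $S$ consists of $2n-1$ copies of $1=2^0$ and the claim holds. In a single round Builder connects the two shortest pure blue paths, say $P_a$ and $P_b$ with $a\le b$; these leave the pool of pure blue paths, and depending on Painter's colour the merged blue path $P_{a+b}$ either enters the pool (if Painter colours the new edge blue) or does not (if Painter colours it red, in which case $P_a,P_b$ become part of a $brb$-path). In terms of $S$, the round either replaces the two smallest elements $a\le b$ by $a+b$, or simply deletes them.

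The red case is immediate: deleting elements cannot increase the number of non-powers of $2$, so the inductive hypothesis is preserved. Thus the whole argument reduces to the blue case, where I must show that replacing the two smallest elements $a\le b$ by their sum creates no second non-power. Here the key tool is Fact~\ref{times2}, which guarantees that at the moment of the round every element of $S$ is at most twice the minimum $a$, in particular $\max S\le 2a$.

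I would then split the blue case according to whether the (at most one) non-power of $2$, call it $x$, is among $a,b$ or not. If one of $a,b$ equals $x$, then after the round the old non-power $x$ has been removed, all surviving old elements are powers of $2$, and at most one new non-power $a+b$ is introduced, so the count stays at most one. The remaining possibility is that both $a$ and $b$ are powers of $2$; then I claim they must be equal, so that $a+b$ is again a power of $2$ and no new non-power appears. Indeed, suppose $a=2^i<2^j=b$. Since $a,b$ are the two smallest elements, the non-power $x$ (if present) satisfies $x\ge b=2^j$, while Fact~\ref{times2} forces $x\le 2a=2^{i+1}$ and $2^j=b\le 2a=2^{i+1}$; the latter gives $j=i+1$, whence $2^{i+1}\le x\le 2^{i+1}$ and $x=2^{i+1}$, contradicting that $x$ is not a power of $2$. (If no non-power is present at all, then distinct $a=2^i,b=2^{i+1}$ merge to the single non-power $3\cdot 2^i$, so the count rises only from $0$ to $1$.)

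The only delicate point, and the step I expect to be the main obstacle, is precisely this last contradiction: ruling out that the two shortest pure blue paths are two distinct powers of $2$ while a non-power is already present. Everything hinges on the factor-$2$ balance of Fact~\ref{times2}, which pins $x$ down to the single value $2^{i+1}$ and thereby forbids a second non-power from ever being born. Once this is established the induction closes, giving the claim for every moment of Stage~1.
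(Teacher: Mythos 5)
Your proof is correct and follows essentially the same route as the paper: both arguments reduce to the single dangerous case where the two shortest pure blue paths are distinct powers $2^i<2^{i+1}$ and a blue edge merges them, and both use Fact~\ref{times2} to squeeze every other pure blue path between $2^{i+1}$ and $2a=2^{i+1}$, so that no pre-existing non-power can coexist with such a merge. Your explicit multiset bookkeeping and case split is just a more detailed write-up of the paper's one-paragraph induction.
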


Indeed, suppose that this property holds after some round $t$, in the next round Builder connects  pure blue paths are $P_{a}$ and $P_{b}$,
and after round $t+1$, the number of pure blue paths in which the number of vertices is not a power of $2$ increases.
It is possible only if the edge in round $t+1$ was coloured blue, $a=2^k$ and $b=2^l$, with some integers $k\neq l$. 
From Fact \ref{times2} we know that it is only possible when $k+1=l$, $P_{2^k}$ was the only shortest pure blue path and $P_{2^l}$ was 
both the second shortest and the longest pure blue path after round $t$. It means that after round $t+1$ there is a pure blue 
path $P_{a+b}$ and every other pure blue path has $2^l$ vertices. 

The next lemma gives some insight into the imbalance properties of the collection of the coloured paths in Stage 1. 

\begin{lemma}\label{imb}
Let  $H_1,H_2,\ldots,H_q$ be the sequence of all imbalanced $brb$-paths at the moment of the game and suppose 
they appeared in that order (i.e.~$H_1$ was created first in the game, $H_2$ was second and so on).
Then $u(H_{k+1})\ge 2u(H_k)$ for $k=1,2, \ldots, q-1$. Moreover for any pure blue path $H$ we have $u(H)\ge 2u(H_q)$.
\end{lemma}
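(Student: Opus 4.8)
The plan is to track how the imbalanced $brb$-paths come into being. Since in Stage~1 Builder only ever joins two shortest pure blue paths and never touches an already existing $brb$-path, an imbalanced $brb$-path is created precisely in a round where the two shortest pure blue paths have \emph{different} sizes $s<t$ and Painter colours the connecting edge red; the resulting path is $P(s,t)$, with imbalance equal to the size gap $u(P(s,t))=t-s$. Thus $H_1,\dots,H_q$ appear in this order, and between two consecutive creations (and after $H_q$, up to the moment in the statement) only two kinds of rounds occur: blue merges of two shortest pure blue paths, and red rounds joining two \emph{equal} shortest paths, which produce balanced $brb$-paths not appearing in the list. First I would record, via Fact~\ref{times2}, that if $H_k=P(s,t)$ with $s<t$ then $t\le 2s$, hence $u(H_k)=t-s\le s$ and $t\ge 2u(H_k)$.

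The technical heart is the following claim: immediately after $H_k=P(s,t)$ (with $s<t$) is created, all remaining pure blue paths have one and the same size $w_k$, which is a power of $2$ with $w_k\ge t\ge 2u(H_k)$. To see this, observe that the two removed paths had the two smallest sizes, so every surviving pure blue path has size in the interval $[t,2s]$ (the upper bound is Fact~\ref{times2}). This interval cannot contain two powers of $2$: since $s<t\le 2s$, its length $2s-t$ is smaller than $t$, while consecutive powers of $2$ above $t$ are at distance at least $t$ apart. It also contains no surviving non-power-of-$2$ size, for if it did, then by Fact~\ref{most} both $s$ and $t$ would have to be powers of $2$, whence $s<t\le 2s$ forces $t=2s$ and the interval degenerates to the single point $\{2s\}$, a contradiction. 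Hence all survivors equal the unique power of $2$ lying in $[t,2s]$, and that value is at least $t\ge 2u(H_k)$.

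With the claim in hand the two assertions follow cheaply. Right after $H_k$ every pure blue path has size $w_k$, so its size is a multiple of $w_k$; and the only operations performed until the next imbalanced path is born -- blue merges and balanced red rounds -- preserve divisibility of every pure blue path size by $w_k$ (a sum of two multiples of $w_k$ is a multiple of $w_k$, and removals keep the property). Consequently, up to the moment in the statement every pure blue path has size a positive multiple of $w_k$, hence at least $w_k\ge 2u(H_k)$; taking $k=q$ gives the ``moreover'' part. For the doubling, $H_{k+1}$ is created when two pure blue paths of \emph{distinct} sizes are joined, and both sizes are multiples of $w_k$, so their difference obeys $u(H_{k+1})\ge w_k\ge 2u(H_k)$.

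I expect the main obstacle to be exactly the threat that some later unequal round has a tiny gap -- say two shortest paths differing by $1$ -- which would wreck the doubling. Such a small gap can only arise through a non-power-of-$2$ pure blue path, and the claim above is precisely what forbids it: Fact~\ref{most} allows at most one non-power-of-$2$ at any time, and this pins the configuration just after each imbalanced creation down to a single repeated power of $2$, after which divisibility by $w_k$ is automatically maintained. The only place demanding a little care is the short case analysis inside the claim (both $s,t$ powers of $2$ versus exactly one of them not), which I would handle as indicated above.
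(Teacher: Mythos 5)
Your proof is correct and follows essentially the same route as the paper: both arguments use Facts \ref{times2} and \ref{most} to show that immediately after an imbalanced $brb$-path $P(s,t)$ is created, all surviving pure blue paths share a single power-of-two length $w\ge t\ge 2u(P(s,t))$, and then propagate divisibility by $w$ through the subsequent blue merges and balanced red rounds. Your write-up merely spells out in more detail the step the paper states tersely (that all survivors have the same power-of-two size), so there is nothing to fix.
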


\begin{proof}
Let us present the inductive argument. Suppose the assertion is true for the sequence $H_1,H_2,\ldots,H_{q-1}$
and consider the round $r$ after which $H_{q-1}=P(s,t)$ appeared. From Facts \ref{times2} and \ref{most} we know that $s,t\in [2^k,2^{k+1}]$ for some integer $k$, so $u(H_{q-1})\le 2^k$. By the two facts and by the minimality of the blue paths generating $H_{q-1}$ we infer that at the end of round $r$ all pure blue paths have $2^{k+1}$ vertices. Therefore $2^{k+1}|u(H)$ for any pure blue path $H$ created after round $r$, until the end of Stage 1. Thus $2u(H_{q-1})\le u(H)$ for any such path $H$. Furthermore, since  $H_q$ is created after round $r$ by connecting two pure blue paths on, say, $s'$ and $t'$ vertices, we have $u(H_q)=|s'-t'|>0$ and this 
number is divisible by $2^{k+1}$. Therefore $2u(H_{q-1})\le u(H_q)$. The argument that $2u(H_{q})\le u(H)$ for every pure blue path
is analogous to the one for $H_{q-1}$. 
\end{proof}

Let us consider the position at the end of Stage 1. Let us recall that then we have at least one blue edge on the board $K_{2n-1}$, 
at most one pure blue path and a collection of $brb$-paths (if any). 
If there is a pure blue path, then let $H_0$ be that path. Otherwise let $H_0$ be the last imbalanced $brb$-path 
that appeared in Stage 1. Let $H_1, H_2,\ldots, H_m$ be the sequence of all imbalanced $brb$-paths which appeared (in that order) in Stage 1, except for the path $H_0$. It follows from Lemma \ref{imb} that 
\begin{equation}\label{uu}
\sum\limits_{j=1}^m u(H_j)\le \sum\limits_{j=0}^{m-1} \frac{u(H_m)}{2^j}\le 2 u(H_m)\le u(H_0).
\end{equation}
Let $H_1', H_2',\ldots, H_l'$ be the family of all $brb$-paths which are balanced (at the end of Stage 1).  

In order to calculate the number of rounds in Stage 1, notice that the subgraph $G$ of $K_{2n-1}$ with $V(G)=V(K_{2n-1})$ whose edge set 
consists of all edges coloured in Stage 1 is a union of $m+l+1$ vertex-disjoint paths so Stage 1 lasts $|E(G)|= |V(G)|-m-l-1=2n-2-m-l$ rounds. 

Observe also that $G$ has no isolated vertices. Indeed, if such a trivial blue path existed at the end of Stage 1, then
in every previous round, Builder would have connected two trivial blue paths and Painter has coloured the selected edges red. 
It would contradict the assumption that there is a blue edge in Stage 1. Thus $G$ has no isolated vertices and thus it has at most 
$n-1$ components. We conclude that 
\begin{equation}\label{comp}
m+l+1\le n-1.
\end{equation}

After Stage 1 the game proceeds to Stage 2. 

Note that for any path $P=P(s,t)$ (balanced or imbalanced)  with $s\ge t$ we have $s=(|V(P)|+u(P))/2$ and $t=(|V(P)|-u(P))/2$.
Let us also recall that for Builder the board of the game is $K_{2n-1}$ so 
$$\sum\limits_{j=0}^m |V(H_j)|+\sum\limits_{j=1}^l |V(H_j')|=2n-1.$$
In Stage 2, Builder applies his strategy from Lemma \ref{phase2} to the paths 
$H_0, H_1,\ldots,H_m, H_1',H_2',\ldots,H_l'$. Thereby he forces a blue path on $t$ vertices, where 
\begin{eqnarray*}
t&=&\frac12\Big(|V(H_0)|+u(H_0)+\sum\limits_{j=1}^m (|V(H_j)|-u(H_j))+\sum\limits_{j=1}^l (|V(H_j')|-u(H_j'))\Big)\\
&=&\frac12\Big(2n-1+u(H_0)-\sum\limits_{j=1}^m u(H_j)-\sum\limits_{j=1}^l u(H_j')\Big)\\
&=&n-\frac12+\frac 12\Big(u(H_0)-\sum\limits_{j=1}^m u(H_j)\Big)\ge n-\frac12.
\end{eqnarray*}
The last inequality follows from \eqref{uu}. Stage 2 ends. 
Thus in Stage 2, Painter forces either a red $C_3$ or a blue $P_n$ and Stage 2 lasts, in view of Lemma \ref{phase2}, at most $2(m+l)$ rounds.

We conclude that the number of rounds in Stage 1 and Stage 2 is not greater than $2n-2-m-l+2(m+l)=2n-2+m+l$. 
Because of \eqref{comp}, the game $\RR(C_3,P_n)$ lasts at most $2n-2+n-2=3n-4$ rounds. 

\section{Proof of Theorem \ref{conupper}}

Let $n\ge 3$ and $n-1\le m\le(n-1)^2/4\le \binom{\lfloor n/2\rfloor }{2}+\binom{\lceil n/2\rceil }{2}$.
In view of Theorem \ref{c3pn} it is enough to assume that $m\ge n$. 
Let $k$ be the smallest integer such that $1\le k\le n$ and $m\le n-k+\binom{\lfloor k/2\rfloor }{2}+\binom{\lceil k/2\rceil }{2}$.  
Builder begins the game $\RR(\Codd,\Con_{n,m})$ by forcing either a red triangle or a connected blue graph $n$ vertices. 
By Theorem \ref{c3pn} it takes him at most $3n-4$ rounds. 
Suppose that after there is no red cycle on the board and denote the blue graph on $n$ vertices by $B$. 

Let $B'$ be a connected blue subgraph of $B$ with $|V(B')|=k$. Clearly $|E(B')|\ge k-1$ and $|E(B)\setminus E(B')|\ge n-k$. 
Further in the game, Builder selects all edges of $E[V(B')]$ which are not coloured yet. It takes him at most 
$\binom{k}{2}-|E(B')|\le \binom{k}{2}-(k-1)$ rounds. If there is no red triangle in the resulting coloured complete graph 
on $k$ vertices, then by Tur\'an's theorem at least $\binom{\lfloor k/2\rfloor }{2}+\binom{\lceil k/2\rceil }{2}$ of its edges are blue. 
There are also at least $n-k$ blue edges in $E(B)\setminus E(B')$ so after at most 
$3n-4+\binom{k}{2}-(k-1)$ rounds 
we have a blue graph on $n$ vertices with at least 
$n-k+\binom{\lfloor k/2\rfloor }{2}+\binom{\lceil k/2\rceil }{2}\ge m$ edges.
Thus 
$$\rr(\Codd,\Con_{n,m})\le 3n+\binom{k}{2}-k-3.$$

It follows from the definition of $k$ that $m=n+\frac{k^2}{2}+O(k)$ and $k=O(\sqrt{m-n+1})$ and hence
$$\rr(\Codd,\Con_{n,m})\le n+2m+O(\sqrt{m-n+1}).$$

\bibliographystyle{amsplain}

\appendix
\section{Grants and Donations}\label{GAD}

In this section we present some intuition behind the choice of the potential function used in Section \ref{lower}.
We start with a game that at first glance is unrelated to $\RR(C_3, P_n)$. 

Given natural numbers $d,r,b$ and $N$, consider the following Grants and Donations game, which will be denoted by $GD(d,r,b, N)$. There are two players, Sponsor and (grant) Committee. There are also two teams of researchers (who are not players in this game). Every month Sponsor decides what he does with his money. He may either make a donation of $d$ dollars directly to one of the teams, or to suggests one team to Committee. In the latter case Committee either gives $b$ dollars to the suggested team and we will say that Committee gave a blue grant, or it gives $r$ dollars to the other team -- this will be called a red grant from the Committee. The game ends when one of the teams has at least $N$ dollars. 
The goal of Sponsor is to end the game as soon as possible and the goal of Committee is the opposite.
Let $\rr(GD(d,r,b, N))$ be the number of rounds in the game where both players play optimally.

If $r\le d$ or $b\le d$, then the best strategy for Sponsor is, obviously, to give the money directly to the first team, so let's assume that $d< r,b$. Furthermore, without loss of generality we can assume that $d<r\le b$.

Now suppose that Sponsor gave money to a team and in the next month he suggests a team to Committee. Sponsor can switch the order of these moves. In the latter case, Committee is less informed at the moment of making a move, so it cannot make a better decision for itself. This observation leads to the following fact.

\begin{fact}
Let $S$ be the set of optimal strategies for Sponsor. In $S$ there exists a strategy such that  Sponsor never suggests a team to Committee after making a donation to any team.
\end{fact}

Let us consider two Sponsor's strategies such that Sponsor never suggests a team to Committee after making a donation to any team. The first one is: always give money to the first team. Then the game ends after $\lceil N/d\rceil$ rounds. The second strategy is: always suggest the first team to Committee. Then Committee can make at most $\lceil N/b\rceil$ blue moves and at most $\lceil N/r\rceil$ red moves (and it cannot achieve both numbers). In this case the game ends after at most $\lceil N/r\rceil+\lceil N/b\rceil-1$ rounds. 

Now let us consider the following strategy of Committee. It gives $b$ dollars to the team chosen by Sponsor if and only if this team has got at least $b$ dollars less from Committee than the other team (we don't consider here the money from Sponsor). With this strategy, if the teams have got $k$ and $k'$ dollars from Committee, respectively, and $k\ge k'$, then at least $k$ dollars were given in red grants and $k-r-b<k'$. Therefore there were more than $k/r+(k-r-b)/b=k(1/r+1/b)-(r+b)/b$ grants, i.e. Committee's moves. If the game ended, i.e. one of the teams has $N$ dollars, then there was at least $(N-k)/d+k(1/r+1/b)-(r+b)/b\ge N\min (1/d,1/r+1/b)-(r+b)/b$ rounds. 

By the above analysis we come to the following conclusion.

\begin{theorem}\label{gd}
Let $d<r\le b$ and $M=N \min(1/d,1/r+1/b)$. Then
$$M-1-r/b<\rr(GD(d,r,b,N))<M+1.$$
\end{theorem}

Note that since $r\le b$, bounds of this theorem form an interval of length smaller or equal to 3.

Now we return to the game $\cG(n,m)$ defined in Section \ref{lower} and consider its Connector-Painter version.
The rules of the Connector-Painter version are the same as the rules of $\cG(n,m)$, with the additional requirement that the edges selected by Connector (Builder) have to induce a connected graph after every round. We denote by $\rr^{con}_{\cG(n,m)}$ the number of rounds of this game provided Connector and Painter play optimally. 
We are going to use Theorem \ref{gd} to find a lower bound for $\rr^{con}_{\cG(n,m)}$. 

\begin{cor}\label{conn}
Let $k>1$ be a real number, $m,n$ be integers, $m\ge 0$ and $n\ge 2$. Then 
$$\rr^{con}_{\cG(n,m)}>(k(n-1)+m)\min(1,1/k+1/(k+1))-k/(k+1).$$
In particular,
$$\rr^{con}_{\cG(n,m)}>\varphi n+m-2\varphi+1.$$
\end{cor}

\begin{proof}
In order to find a lower bound, we define a strategy for Painter. Let Painter colour the first edge red. In every next round she plays so that the graph induced by coloured edges is red-bipartite. Since it is also 
connected it has exactly one red-bipartition $(V_1, V_2)$. Observe that such a strategy is possible. Indeed, given the 
red-bipartition $(V_1, V_2)$ and a new edge $e$ selected by Connector, if $e$ has one end in $V_1$ and the other in $V_2$, then Painter colours it red; if $e$ has one end in $V_1$ and the other end is a new vertex, then Painter can either colour $e$ red and the new vertex is added to $V_2$, or she can colour $e$ blue -- then the new vertex is added to $V_1$ and a blue edge is added to $E[V_1]$.

The game ends exactly when $|V_i|\ge n$ and there are at least $m$ blue edges in $E[V_i]$ for $i=1$ or $i=2$.
Observe that this game is very similar to a Grants and Donations game: Sponsor, Committee and research groups correspond to Connector, Painter, $V_1$ and $V_2$, respectively. The $i$-th research group has two types of ``money'': vertices and blue edges in $V_i$. Nevertheless, we can assign the value 1 to an edge and $k$ to a vertex so that the funds are in dollars. After her first move in the Connector-Painter game, Painter can pretend that she is playing $GD(1,k,k+1, N)$, where $N=(n-1)k+m$ (note that after the first move $|V_1|=|V_2|=1$ and there is no blue edge in $E[V_1], E[V_2]$). Therefore
$$\rr^{con}_{\cG(n,m)}\ge 1+\rr(GD(1,k,k+1,(n-1)k+m)>(k(n-1)+m)\min(1,1/k+1/(k+1))-k/(k+1).$$
For $k=\varphi$ we get the second part of the thesis.
\end{proof}

The above result gives some insight into the game $\cG(n,m)$ if Builder keeps the coloured graph connected. 
In the general case, one has to overcome technical difficulties arising from many components of the coloured graph and therefore the potential function defined in Section \ref{lower} is more complicated. 


\end{document}